\documentclass[12pt]{amsart}
\usepackage{ucs}
\usepackage{amssymb}
\usepackage{amsthm}
\usepackage{amsmath}
\usepackage{latexsym}
\usepackage[cp1251]{inputenc}
\usepackage{graphicx}
\usepackage{wrapfig}
\usepackage{caption}
\usepackage{subcaption}
\usepackage{indentfirst}
\usepackage[left=2.5cm,right=2.5cm,top=2.4cm,bottom=2.4cm,bindingoffset=0cm]{geometry}
\usepackage{enumerate}
\usepackage{makecell}

\makeatletter 
\def\@seccntformat#1{\csname the#1\endcsname. } 
\def\@biblabel#1{#1.} 
\DeclareMathOperator{\lcm}{lcm}
\DeclareMathOperator{\gcdd}{gcd}
\DeclareMathOperator{\Aut}{Aut}

\makeatother

\title{ON RECOGNITION OF~${A_6\times A_6}$ BY THE SET OF CONJUGACY CLASS SIZES}

\author{Viktor Panshin}
\address{Novosibirsk State University, Novosibirsk, Russia}
\email{v.pansh1n@yandex.ru}

%\thanks{thanks line}

\date{}
\newtheorem{state}{Statement}[section]

\newtheorem*{cc}{Question}
\newtheorem{lemm}[state]{Lemma}
\newtheorem*{T2}{Theorem}
\begin{document}

	\vspace{\baselineskip}
	\vspace{\baselineskip}
	
	\vspace{\baselineskip}
	
	\vspace{\baselineskip}

	\begin{abstract}
		For a finite group $G$ denote by $N(G)$ the set of conjugacy class sizes of $G$. Recently the following question has been asked: Is it true that for each nonabelian finite simple group $S$ and each $n\in\mathbb{N}$, if the set of class sizes of a finite group $G$ with trivial center is the same as the set of class sizes of the direct power $S^n$, then $G\simeq S^n$? In this paper we approach an answer to this question by proving that $A_6\times A_6$ is uniquely determined by $N(A_6\times A_6)$ among finite groups with trivial center.
		\\
		\\
		\textbf{Keywords}: finite groups, conjugacy classes, class sizes.
		\\
		\\
		\textbf{MSC}: 20E45, 20D60.
	\end{abstract}
	
	\maketitle

	\section{Introduction}
	For a finite group $G$ denote by $N(G)$ the set of conjugacy class sizes of $G$. In $1980$s J.G.~Thompson posed the following conjecture: if $L$ is a finite nonabelian simple group, $G$ is a finite group with $Z(G)=1$ and $N(G)=N(L)$, then $G\simeq L$. Later A.S.~Kondrat'ev added this conjecture to the \emph{Kourovka Notebook} \cite[Question~$12.38$]{Cou}. In a series of papers of different authors, it took more than twenty years to confirm the conjecture. The final step was done in \cite{Gor2}, where a full historical overview of the proof can be found.
	
	There are many ways to generalize Thompson's conjecture, in this paper we consider one of them.
	
	\begin{cc}\cite[Question~$20.29$]{Cou}		
		Let $S$ be a nonabelian finite simple group. Is it true that for any $n\in\mathbb{N}$, if the set of class sizes of a finite group $G$ with $Z(G)=1$ is the same as the set of class sizes
		of the direct power $S^n$, then $G\simeq S^n$?		
	\end{cc}
	
	There are no examples when the answer to this question is negative and the only studied case with $n>1$ is when $L=A_5$ and $n=2$  \cite{Gor3}. We continue studying this question by proving the following theorem.
	
	\begin{T2}
		If $G$ is a group with $Z(G)=1$ and $N(G) = N(A_6\times A_6)$, then $G\simeq A_6\times A_6$.
	\end{T2}
	
	\section{Preliminaries}	
	
	Let $n,m$ be integers and $p$ a prime. By $n_p$ we denote the maximal power of $p$ dividing $n$. The least common multiple and the greatest common divisor of $n$ and $m$ is denoted by lcm$(n,m)$ and gcd$(n,m)$ respectively.
	
	By $\pi(G)$ we denote the set of all prime divisors of the order of $G$. Given an element $x$ of a group $G$, denote by $x^G$ the set $\{x^g$ $|$ $g\in G\}$, that is, the conjugacy class contaning $x$, and by $|x^G|$ its size.
	
	\begin{lemm}\label{N}
		
		${N(A_6)=\{1,40,45,72,90\}}$, and $n\in N(A_6\times A_6)$ if and only if $n=a\cdot b$, where $a,b\in N(A_6)$.
		
		\begin{proof}
			The first statement can be easily verified. The second statement follows from the well-known fact that $N(G\times H)=N(G)\cdot N(H)=\{n~|~n=a\cdot b,~a\in N(G), b\in N(H) \}$.
		\end{proof}
	\end{lemm}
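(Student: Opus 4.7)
The lemma has two independent parts, both essentially bookkeeping. The plan is to dispose of each by a direct calculation.

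For the first part, I would enumerate the conjugacy classes of $A_6$ by going through the cycle types of even permutations on six letters: the identity, double transpositions, 3-cycles, products of two disjoint 3-cycles, 5-cycles, and a 4-cycle times a transposition. For each cycle type I would compute the $S_6$-class size by the standard formula $|x^{S_6}| = 6!/|C_{S_6}(x)|$, and then invoke the classical criterion that an $S_6$-class contained in $A_6$ splits into two $A_6$-classes of equal size precisely when its $S_6$-centralizer lies in $A_6$ (equivalently, when the cycle type consists of distinct odd parts). Only the 5-cycle class satisfies this, splitting into two classes of size~$72$; all other classes remain intact with sizes $1$, $45$, $40$, $40$, and $90$. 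Collecting these values and deduplicating gives $N(A_6)=\{1,40,45,72,90\}$.

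For the second part, the crucial fact is that centralizers respect direct products: for any $(x,y)\in G\times H$ one has $C_{G\times H}((x,y))=C_G(x)\times C_H(y)$, whence
\[
|(x,y)^{G\times H}|=\frac{|G\times H|}{|C_{G\times H}((x,y))|}=|x^G|\cdot|y^H|.
\]
Letting $(x,y)$ range over $G\times H$ shows $N(G\times H)=\{ab:a\in N(G),\,b\in N(H)\}$. Specialising to $G=H=A_6$ yields the claim.

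There is no real obstacle here; both assertions are purely computational and this lemma merely records, for convenient reference in the main theorem, the arithmetic data that will be exploited later. The substantive difficulty of the paper lies downstream, in ruling out counterexamples with $N(G)=N(A_6\times A_6)$.
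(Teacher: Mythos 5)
Your proposal is correct and matches the paper's (much terser) argument: the paper simply asserts that $N(A_6)$ can be verified directly and cites the standard fact $N(G\times H)=N(G)\cdot N(H)$, which is exactly what you prove via $C_{G\times H}((x,y))=C_G(x)\times C_H(y)$. Your cycle-type enumeration and the splitting criterion for the $5$-cycles are the intended verification, and all the class sizes you list are correct.
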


	\begin{lemm}\label{gor}\cite[Theorem~5.2.3]{Gor}
	Let $A$ be a $p^\prime$-group of automorphisms of an abelian $p$\nobreakdash-group~$P$. Then $P=C_P(A)\times[P,A]$.
\end{lemm}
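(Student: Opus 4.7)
The plan is to use the classical averaging argument available under coprime action. Write $P$ additively, so the statement becomes $P=C_P(A)\oplus[P,A]$. Since $P$ is a $p$-group, every element of $P$ has order a power of $p$, and $|A|$ is coprime to $p$; hence multiplication by $n:=|A|$ is an automorphism of $P$, and there is a well-defined endomorphism ``$\tfrac{1}{n}$'' inverse to it on $P$. This invertibility is what makes the whole argument work and is the only place the hypothesis $\gcd(|A|,p)=1$ is used.

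Define the averaging map
\[
\psi\colon P\to P,\qquad \psi(x)=\tfrac{1}{n}\sum_{a\in A}a(x).
\]
First I would check that $\psi$ is a $\mathbb{Z}$-module homomorphism (immediate since $P$ is abelian and each $a$ is an automorphism) and that its image lies in $C_P(A)$: for any $b\in A$, $b\,\psi(x)=\tfrac{1}{n}\sum_{a}(ba)(x)=\psi(x)$ by reindexing the sum. Next, for $y\in C_P(A)$, each $a(y)=y$, so $\psi(y)=\tfrac{1}{n}\cdot n\cdot y=y$. Hence $\psi$ is a retraction of $P$ onto $C_P(A)$, giving the internal direct-sum decomposition $P=C_P(A)\oplus\ker\psi$.

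It remains to identify $\ker\psi$ with $[P,A]$. The inclusion $[P,A]\subseteq\ker\psi$ is a direct computation: for a generator $a(x)-x$ of $[P,A]$, reindexing gives $\sum_b b(a(x))=\sum_b b(x)$, so $\psi(a(x)-x)=0$. For the reverse direction I would use the explicit formula
\[
x-\psi(x)=\tfrac{1}{n}\sum_{a\in A}\bigl(x-a(x)\bigr),
\]
which displays $x-\psi(x)$ as an element of $[P,A]$; thus $P=C_P(A)+[P,A]$. Combined with $[P,A]\subseteq\ker\psi$ and the direct-sum decomposition above, this forces $C_P(A)\cap[P,A]\subseteq C_P(A)\cap\ker\psi=0$ and $\ker\psi=[P,A]$, completing the proof.

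The argument is essentially routine; the only real point of care is to justify the division by $n$, i.e.\ to observe that coprimality of $|A|$ with the exponent of $P$ lets the averaging map live inside $P$ itself rather than in some $\mathbb{Q}$-extension. Once that is in place, the decomposition follows with no further work, and no use of Lemma~\ref{N} or any deeper structure theory is required.
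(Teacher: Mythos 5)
The paper offers no proof of this lemma at all---it is quoted directly from Gorenstein's book as Theorem~5.2.3---but your averaging argument is the standard proof of exactly this statement and is correct as written. The one step worth making explicit is that $\tfrac{1}{n}\bigl(\sum_{a\in A}(x-a(x))\bigr)$ really lies in $[P,A]$: this holds because $[P,A]$ is a finite subgroup of $P$ on which multiplication by $n$ is injective, hence an automorphism, so $[P,A]$ is preserved by the inverse map $\tfrac{1}{n}$.
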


In the following two lemmas we list well-known facts on sizes of conjugacy classes in finite groups (see, e.g.~\cite[Lemma 1]{Gor3}).

\begin{lemm}\label{big1}
	
	Suppose that $G$ is a finite group, $K\unlhd G$, $x\in G$, and $\overline{x}\in \overline{G} = G/K$. Then $|x^K|$ and $|\overline{x}^{\overline{G}}|$ divide $|x^G|$.
\end{lemm}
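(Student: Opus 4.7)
The plan is a short index-chasing argument: for each of the two claimed divisibilities I will exhibit an intermediate subgroup between $C_G(x)$ and $G$ and then apply Lagrange's theorem.

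For the first divisibility, $|x^K|$ divides $|x^G|$, I would set $H_1 := C_G(x) K$. This is a subgroup of $G$ because $K \unlhd G$ means $C_G(x)$ normalizes $K$. Since $C_G(x) \leq H_1 \leq G$, the tower law gives $[G : C_G(x)] = [G : H_1]\cdot[H_1 : C_G(x)]$. The inner index is computed by the second isomorphism theorem: $[C_G(x) K : C_G(x)] = [K : C_G(x) \cap K] = [K : C_K(x)] = |x^K|$. Therefore $|x^G| = [G : C_G(x) K] \cdot |x^K|$, and in particular $|x^K|$ divides $|x^G|$.

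For the second divisibility, $|\overline{x}^{\overline{G}}|$ divides $|x^G|$, let $\pi : G \to \overline{G}$ denote the natural projection and set $H_2 := \pi^{-1}(C_{\overline{G}}(\overline{x}))$. Then $H_2$ is a subgroup of $G$ containing $K$, and by the correspondence theorem $[G : H_2] = [\overline{G} : C_{\overline{G}}(\overline{x})] = |\overline{x}^{\overline{G}}|$. Every $g \in C_G(x)$ satisfies $\pi(g) \in C_{\overline{G}}(\overline{x})$, so $C_G(x) \leq H_2 \leq G$, and Lagrange's theorem applied to this chain shows that $|\overline{x}^{\overline{G}}| = [G : H_2]$ divides $[G : C_G(x)] = |x^G|$.

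The statement is a standard folklore fact and I do not expect any genuine obstacle; the only content is recognizing $C_G(x) K$ and $\pi^{-1}(C_{\overline{G}}(\overline{x}))$ as the correct intermediate subgroups, after which both divisibilities follow immediately from Lagrange. No hypothesis on $x$ beyond $x \in G$ is needed, since throughout $x^K$ is interpreted as the $K$-orbit of $x$ under conjugation, and $|x^K| = [K : C_K(x)]$ with $C_K(x) = C_G(x) \cap K$.
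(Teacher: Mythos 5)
Your proof is correct. The paper does not prove this lemma at all---it simply cites it as a well-known fact (referring to \cite[Lemma 1]{Gor3})---so there is no in-paper argument to compare against; your index-chasing via the intermediate subgroups $C_G(x)K$ and $\pi^{-1}(C_{\overline{G}}(\overline{x}))$ is the standard and complete way to establish both divisibilities.
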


\begin{lemm}\label{big2}
	
	Let $G$ be a finite group and $x,y\in G$, $xy=yx$, and $\gcdd(|x|,|y|)=1$. Then $C_G(xy)=C_G(x)\cap C_G(y)$, $|x^G|\cdot|y^G|\ge|(xy)^G|$ and $\lcm (|x^G|,|y^G|)$ divides $|(xy)^G|$.
	
\end{lemm}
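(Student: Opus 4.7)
The plan is to establish the centralizer identity first, then derive the two divisibility/inequality statements as immediate consequences. The key elementary fact is that, under the assumptions $xy=yx$ and $\gcdd(|x|,|y|)=1$, both $x$ and $y$ lie in the cyclic group $\langle xy\rangle$. Indeed, $\langle x,y\rangle$ is abelian and $\langle x\rangle\cap\langle y\rangle=1$ by the coprime order condition, so $\langle x,y\rangle=\langle x\rangle\times\langle y\rangle$ is cyclic of order $|x|\cdot|y|$ and equals $\langle xy\rangle$. Concretely, using B\'ezout's identity I would pick integers $a,b$ with $a|x|+b|y|=1$ and observe that $(xy)^{a|x|}=y$ and $(xy)^{b|y|}=x$, so $x,y\in\langle xy\rangle$.

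With this in hand, the first claim is immediate. The inclusion $C_G(x)\cap C_G(y)\subseteq C_G(xy)$ is trivial, while the reverse follows because any element commuting with $xy$ commutes with every power of $xy$, hence with $x$ and $y$. For the third claim, since $C_G(xy)=C_G(x)\cap C_G(y)$ is contained in both $C_G(x)$ and $C_G(y)$, the indices $|x^G|=[G:C_G(x)]$ and $|y^G|=[G:C_G(y)]$ both divide $|(xy)^G|=[G:C_G(xy)]$ (using the standard transitivity of index for subgroups), so their least common multiple divides $|(xy)^G|$ as well.

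For the inequality $|x^G|\cdot|y^G|\geq|(xy)^G|$, I would invoke the standard double-coset counting bound: for any two subgroups $H,K$ of a finite group $G$, one has $[G:H\cap K]\leq[G:H]\cdot[G:K]$. Applying this with $H=C_G(x)$ and $K=C_G(y)$ and using $C_G(xy)=H\cap K$ yields the desired inequality.

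The proof is entirely formal; the only substantive observation is that coprime commuting elements are mutual powers of their product, and after that every assertion reduces to general facts about indices of intersections of subgroups. There is no real obstacle, so the write-up will be short.
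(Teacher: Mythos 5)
Your proof is correct and complete: the key observation that coprime commuting elements are powers of their product (via B\'ezout), the resulting centralizer identity, and the two index facts $[G:H]\mid[G:H\cap K]$ and $[G:H\cap K]\le[G:H]\,[G:K]$ are all accurately deployed. The paper itself gives no proof of this lemma, merely citing it as well known (referring to \cite[Lemma 1]{Gor3}), and your write-up is precisely the standard argument one would supply.
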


\begin{lemm}\label{wreath}
	If $G=H\wr \langle g\rangle$ is the wreath product of a group $H$ and involution $g$, then $|g^G|=|H|$.
\end{lemm}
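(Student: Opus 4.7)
The plan is to compute the centralizer $C_G(g)$ explicitly and then read off $|g^G|=[G:C_G(g)]$.

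First I would unpack the definition: $G=(H\times H)\rtimes\langle g\rangle$, where the base subgroup $B=H\times H$ has order $|H|^2$ and $g$ acts on $B$ by swapping the two coordinates, so $|G|=2|H|^2$. Every element of $G$ admits a unique expression $(h_1,h_2)g^i$ with $h_1,h_2\in H$ and $i\in\{0,1\}$.

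Next I would determine when $(h_1,h_2)g^i$ commutes with $g$. Using the defining action $g(h_1,h_2)g^{-1}=(h_2,h_1)$ and the fact that $g$ commutes with $g^i$, a direct expansion of both sides of $(h_1,h_2)g^i\cdot g=g\cdot(h_1,h_2)g^i$ reduces to $(h_1,h_2)=(h_2,h_1)$, i.e.\ to $h_1=h_2$; the exponent $i$ is unconstrained. Therefore
\[
C_G(g)=\{(h,h)g^i : h\in H,\ i\in\{0,1\}\},
\]
a subgroup of order $2|H|$. Dividing $|G|$ by this gives $|g^G|=2|H|^2/(2|H|)=|H|$.

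There is no real obstacle here: the statement reduces to a one-line centralizer computation in a semidirect product of order $2|H|^2$. The only point requiring care is the convention for conjugation in the wreath product, which must be set up so that $g$ genuinely acts by coordinate swap on $H\times H$; after that the result is immediate.
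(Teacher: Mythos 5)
Your proof is correct and follows essentially the same route as the paper: the paper simply notes that $C_B(g)$ is the diagonal $\{(h,h)\}$ of the base $B=H\times H$ and lets the index computation follow, while you spell out the full centralizer $C_G(g)=C_B(g)\langle g\rangle$ of order $2|H|$ before dividing. No gaps.
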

	
	\begin{proof}
		If $B=H\times H$ is the base of the wreath product, then $C_B(g)=\{(h,h)~|~h\in H\}$, so the statement follows.
	\end{proof}

	\begin{lemm}\label{index} 
		Let $x\in G$ with $p^n=|x^G|_p<|G|_p$, where $n\geq1$. Then there is a $p$-element $y\in C_G(x)$ with $|y^G|_p\leq p^{n-1}$. If, in addition, $x$ is a $p'$-element, then $|(xy)^G|_p=|x^G|_p$.
		\begin{proof}
			Take $P\in Syl_p(G)$ such that $C_G(x)\cap P = \tilde{P}\in Syl_p(C_G(x))$, so $|P:\tilde{P}| = p^n$. Put $N=N_P(\tilde{P})$. There is a nontrivial element $y\in Z(N)\cap\tilde{P}$. Since $|P:N|\leq p^{n-1}$, it follows that $|y^G|_p\leq p^{n-1}$.
			
			By the above paragraph, $\tilde{P}\le C_G(x)\cap C_G(y)\le C_G(xy)$. If $x$ is a $p'$-element, then Lemma~\ref{big2} implies that $|x^G|$ divides $|(xy)^G|$, so the second statement of the lemma follows.
		\end{proof}
	\end{lemm}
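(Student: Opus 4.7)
The plan is to produce a $p$-element $y\in C_G(x)$ of small conjugacy-class $p$-part by exploiting a normaliser construction inside a single Sylow $p$-subgroup of~$G$. First I would fix compatible Sylow data: choose $P\in Syl_p(G)$ so that $\tilde P:=P\cap C_G(x)$ is a Sylow $p$-subgroup of $C_G(x)$, which is possible because any Sylow $p$-subgroup of $C_G(x)$ extends to one of~$G$. Then $|P:\tilde P|=|x^G|_p=p^n$, and since $n\geq 1$ the inclusion $\tilde P\subsetneq P$ is strict.

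Next I would locate~$y$. Set $N:=N_P(\tilde P)$. Because proper subgroups of a finite $p$-group are proper in their normaliser, $\tilde P\subsetneq N$, so $|N:\tilde P|\geq p$ and hence $|P:N|\leq p^{n-1}$. From $|x^G|_p<|G|_p$ we also get $|\tilde P|=|C_G(x)|_p>1$, so $\tilde P$ is a nontrivial normal subgroup of the $p$-group~$N$ and consequently meets $Z(N)$ nontrivially; I would pick any $1\neq y\in Z(N)\cap\tilde P$. Then $y$ is a $p$-element with $y\in\tilde P\subseteq C_G(x)$, and $N\subseteq C_G(y)$, yielding $|y^G|_p\leq|P:N|\leq p^{n-1}$.

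For the second assertion, assume $x$ is a $p'$-element. Since $xy=yx$ and $\gcd(|x|,|y|)=1$, Lemma~\ref{big2} gives $|x^G|\mid|(xy)^G|$, so $p^n\mid|(xy)^G|_p$. Conversely, $\tilde P\subseteq C_G(x)\cap C_G(y)=C_G(xy)$ forces $|(xy)^G|_p\leq|G|_p/|\tilde P|=p^n$, giving equality. The only nontrivial manoeuvre is recognising that $y$ must live simultaneously in $\tilde P$ (to centralise~$x$) and in the centre of a $p$-subgroup of small index in $P$ (to keep $|y^G|_p$ small); intersecting $Z(N_P(\tilde P))$ with $\tilde P$ is the natural choice, and everything else is routine arithmetic on $p$-parts of indices via Lemma~\ref{big2}.
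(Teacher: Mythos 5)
Your proposal is correct and follows essentially the same route as the paper: the same choice of $P$ with $\tilde P=P\cap C_G(x)\in Syl_p(C_G(x))$, the same normaliser $N=N_P(\tilde P)$ with a nontrivial $y\in Z(N)\cap\tilde P$, and the same two-sided bound on $|(xy)^G|_p$ via Lemma~\ref{big2} and the containment $\tilde P\le C_G(xy)$. You merely spell out a few details the paper leaves implicit (normaliser growth in $p$-groups, nontriviality of $Z(N)\cap\tilde P$), which is fine.
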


	\section{Proof of the theorem}	
	
	Let $G$ be a finite group with ${Z(G)=1}$ such that ${N(G)=N(A_6\times A_6)}$. It follows from~\cite[Corollary~1]{Cam} that $\pi(G)=\pi(A_6\times A_6)=\{2,3,5\}$.

	\begin{lemm}\label{5e}
		
		There is a $5$-element $x\in G$ with $|x^G| = 72$ and a $3$-element $u\in G$ with $|u^G|=40$.
		
		\begin{proof}
			Lemma~\ref{N} yields that there is an element $x$ of prime order with $|x^G| = 72$. If $|x|=5$, then we are done, so we can assume that $|x|\in\{2,3\}$.
			
			Suppose that $|x| = 2$. Lemma~\ref{index} implies that there is a 3-element $y\in C_G(x)$ with $|y^G|_3\leq3$, so $|y^G|\in\{40,40^2\}$ due to Lemma~\ref{N}. If $|y^G|=40^2$, then lcm$(|x^G|,|y^G|)=$ lcm$(40^2,72)=120^2$. By Lemma~\ref{big2}, there must be a multiple of $120^2$ in $N(G)$; a contradiction. Hence $|y^G|=40$. 
			
			Applying Lemma~\ref{index} once more, we obtain a $5$-element $z\in C_G(y)$ with $|z^G|_5\leq1$, so $|z^G|\in\{72,72^2\}$. If $|z^G|=72^2$, then $N(G)$ must contain a multiple of $72^2\cdot 5$; a contradiction. Thus, $z$ is a desired $5$-element with  $|z^G|=72$.
			
			The case $|x|=3$ can be treated similarly: for a suitable $2$-element $y\in C_G(x)$, there is a $5$-element $z\in C_G(y)$ with $|z^G|=72$.			 	
			
			Again, Lemma~\ref{N} yields that there is an element $u$ of prime order with $|u^G| = 40$. Applying the same arguments as above we obtain a $3$-element satisfying this property.
		\end{proof}
	\end{lemm}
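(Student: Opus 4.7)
The plan is to execute a two-step reduction: first produce a prime-order element of the desired class size, then chain through centralizers, alternating primes via Lemmas~\ref{index} and~\ref{big2}, until a $5$-element with class size $72$ (respectively a $3$-element with class size $40$) is obtained.

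My first step would be to locate a prime-order element $x$ with $|x^G|=72$. Lemma~\ref{N} ensures that some $x\in G$ satisfies $|x^G|=72$. Writing $x=x_2x_3x_5$ as the product of its commuting prime-power parts, Lemma~\ref{big2} gives $C_G(x)=C_G(x_2)\cap C_G(x_3)\cap C_G(x_5)$, so each $|x_p^G|$ divides $72$. Inspecting $N(G)$ one sees that its only divisors of $72$ are $1$ and $72$, because every other element of $N(A_6)\cdot N(A_6)$ either exceeds $72$ or is divisible by~$5$. Since $Z(G)=1$, at least one $x_p$ is nontrivial with $|x_p^G|=72$; replacing $x_p$ by a power of order~$p$, whose centralizer can only grow, produces a prime-order element with class size $72$ and $|x|\in\{2,3,5\}$.

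If $|x|=5$, the first assertion is immediate. Otherwise suppose $|x|=2$. Since $N(G)$ contains $45\cdot 45=2025$, we have $|G|_3\geq 81>9=|x^G|_3$, so Lemma~\ref{index} supplies a $3$-element $y\in C_G(x)$ with $|y^G|_3\leq 3$. The elements of $N(G)$ with $3$-part at most $3$ are exactly $1$, $40$ and $40^2$, so $|y^G|\in\{40,40^2\}$. The lcm-bound from Lemma~\ref{big2} rules out $|y^G|=40^2$, since $\lcm(72,1600)=14400$ exceeds every element of $N(G)$. Hence $|y^G|=40$. A second application of Lemma~\ref{index}, with $p=5$, produces a $5$-element $z\in C_G(y)$ with $|z^G|_5=1$, so $|z^G|\in\{72,72^2\}$; the value $72^2$ is excluded because $\lcm(40,72^2)=25920$ again exceeds the maximum of $N(G)$, leaving $|z^G|=72$. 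The remaining subcase $|x|=3$ runs through a symmetric chain involving a $2$-element and a $5$-element.

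A verbatim analogue will produce the claimed $3$-element $u$ with $|u^G|=40$, starting from a prime-order element of class size $40$ extracted from Lemma~\ref{N} in the same manner. The main technical obstacle is bookkeeping: at every stage one must confirm that $|G|_p$ is large enough to invoke Lemma~\ref{index}, correctly enumerate the admissible class sizes using Lemma~\ref{N}, and tightly exclude the unwanted values through the lcm comparison. The arithmetic is routine, but the case analysis must be exhaustive to make the reasoning watertight.
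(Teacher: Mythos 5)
Your proposal is correct and takes essentially the same route as the paper: extract a prime-order element of class size $72$ (resp.\ $40$) from Lemma~\ref{N}, then chain through centralizers using Lemmas~\ref{index} and~\ref{big2}, excluding the values $40^2$ and $72^2$ by the lcm bound. The only difference is that you spell out the decomposition-into-prime-parts argument behind the first step and the check that $|G|_p$ is large enough to invoke Lemma~\ref{index}, both of which the paper leaves implicit.
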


%	\begin{lemm}\label{3e} 
		
%		There is a $3$-element $x\in G$ such that $|x^G| = 40$. 
		
%		\begin{proof}
			
%			Suppose that $G$ does not contain such a $3$-element. Take $y\in G$ such that $|y^G| = 40$. By minimality of $|y^G|$ we can assume that $|y|\in \{2,5\}$. 
			
%			Suppose that $|y| = 2$. By Lemma~\ref{index} there is a $5$-element $x\in C_G(y)$ with $|x^G|_5=1$, so $|x^G|\in\{72,72^2\}$. But if $|x^G|=72^2$ then we get a contradiction, since lcm($|x^G|,|y^G|$) divides $|(xy)^G|$, so $|x^G|=72$. By Lemma~\ref{index} there is a $3$-element $z\in C_G(x)$ with $|z^G|_3\leq3$, so $|z^G|\in\{40,40^2\}$. Similarly to Lemma~\ref{5e} we have $|z^G|=40$; a contradiction. The case $|y|=5$ can be proven similarly.			
%		\end{proof}
%	\end{lemm}
	
	In the next three lemmas we show that $O_p(G)=1$ for every $p\in \pi(G)$; so the socle of $G$ must be the direct product of nonabelian simple groups.

	\begin{lemm}\label{O5}
	
	$O_5(G) = 1$.
	
	\begin{proof}
		
		Suppose the contrary. Let $V$ be a nontrivial normal abelian $5$-subgroup of $G$.
		
		Let $w$ be a $3$-element with $|w^G|=40$. Lemma~\ref{gor} yields $V = C_V(w)\times [V,w]$. Since $w$ acts freely on $[V,w]$, we have that ${|[V,w]|-1}$ is a multiple of $3$, so $|[V,w]|= 5^{2k}$ for $k\geq 0$. In view of Lemma~\ref{big1},			
		$$
		|w^V|=|V:C_V(w)|=|[V,w]|=5^{2k}\text{~divides~} |w^G|_5=5.
		$$
		Thus, $|[V,w]|=1$ and $V=C_V(w)$.
		
		If ${P\in Syl_5(G)}$ and $y\in V\cap Z(P)\setminus \{1\}$, then $|y^G|_5=1$. It follows from Lemma~\ref{N} that $|y^G|\in\{72,72^2\}$. Suppose that $|y^G|=72^2$. Since $\gcdd(|w|,|y|)=1$ Lemma~\ref{big2} implies that lcm$(|w^G|,|y^G|) = 72^2\cdot5$ divides $|(wy)^G|$, which contradicts Lemma~\ref{N}. Hence $|y^G|=72$. 
		
		By Lemma~\ref{index}, there is a $2$-element $z\in C_G(y)$ with $|z^G|_2\leq2^2$, so $|z^G|\in\{45,90,45^2,45\cdot90,90^2\}$. If $|z^G|\in\{45^2,45\cdot90,90^2\}$, then Lemma~\ref{big2} implies that $45^2\cdot2^3$ divides $|(yz)^G|$, a contradiction. Therefore, $|z^G|\in\{45,90\}$. Since $y$ is a $2'$-element, $|(yz)^G|_2=|y^G|_2=2^3$ due to Lemma~\ref{index}. It follows from Lemma~\ref{big2} that lcm$(|y^G|,|z^G|)=2^3\cdot3^2\cdot5=360$ divides $|(yz)^G|$. Thus, $|(yz)^G|\in\{40\cdot45,45\cdot72\}$. 
		
		Applying Lemma~\ref{index} once more, we obtain a $3$-element $x\in C_G(z)$ such that $|x^G|_3\le3$, whence $|x^G|\in\{40,40^2\}$. In fact $|x^G|=40$, otherwise Lemma~\ref{big2} implies that $40^2\cdot3^2$ divides $|(xz)^G|$, which is impossible. We have $|(xz)^G|_3=|z^G|_3=3^2$ by Lemma~\ref{index} and $|(xz)^G|$ is a multiple of lcm$(|x^G|,|z^G|)=360$. Therefore, $|(xz)^G|\in\{40\cdot45,40\cdot72,40\cdot 90\}$.
				
		By the second paragraph $x$ centralizes $V$, in particular, $x\in C_G(y)$. Lemma~\ref{big2} implies that $|(xy)^G|\le|x^G|\cdot|y^G|=40\cdot72$  and $|(xy)^G|$ is a multiple of lcm$(|x^G|,|y^G|)=360$. Therefore, $|(xy)^G|\in\{40\cdot45,40\cdot72\}$.  
		
		Since $x,y,z$ are of coprime orders and centralize each other, $|(xyz)^G|$ is a multiple of lcm$(|(xy)^G|,|(xz)^G|,|(yz)^G|)$. Therefore, we can assume that $|(xy)^G|=|(yz)^G|=40\cdot45$ and $|(xz)^G|\in\{40\cdot45,40\cdot90\}$, because otherwise $|(xyz)^G|\notin N(G)$.			
		
		It is clear that $|(xyz)^G|_5=|(xy)^G|_5$ or, equivalently, $|C_G(xy)\cap C_G(z)|_5=|C_G(xy)|_5$. The latter implies that a Sylow $5$-subgroup $Q$ of $C_G(xy)\cap C_G(z)$ is a Sylow $5$-subgroup of $C_G(xy)$. Since $V$ is a normal $5$-subgroup of $C_G(xy)=C_G(x)\cap C_G(y)$, it follows that $V\le Q\le C_G(z)$.
		
		Suppose that $P$ is a Sylow 5-subgroup of $G$ containing a Sylow $5$-subgroup $\tilde{P}$ of $C_G(x)$ and $u\in V\cap Z(P)\setminus\{1\}$. Then $|u^G|=72$ by the arguments from the third paragraph of the proof. Since $u$ centralizes $\tilde{P}$, it follows that $|(xu)^G|_5=|x^G|_5=5$. Lemma~\ref{big2} and Lemma~\ref{index} yield that $|(xu)^G|=40\cdot 72$. Note that $u\in V\le C_G(z)$, so $|(xuz)^G|$ is a multiple of lcm$(|(xu)^G|,|(xz)^G|)=2^6\cdot3^2\cdot5^2$, a contradiction.
	\end{proof}
\end{lemm}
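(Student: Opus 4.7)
The plan is to argue by contradiction. Suppose $O_5(G) \neq 1$, and choose a nontrivial elementary abelian normal $5$-subgroup $V$ contained in $O_5(G)$. I want to use the $3$-element $w$ with $|w^G| = 40$ furnished by Lemma~\ref{5e} to force $V$ to lie in $C_G(w)$, then manufacture a $5$-element $y \in V$ whose class has restricted size, and finally combine $y$ with further elements produced by Lemma~\ref{index} until the resulting product class size lies outside $N(A_6 \times A_6)$.

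The first step uses coprime action. By Lemma~\ref{gor}, $V = C_V(w) \times [V,w]$, and $w$ acts fixed-point-freely on $[V,w]$, so $|w|$ divides $|[V,w]| - 1$; since $|w|$ is a $3$-power, $|[V,w]|$ must be an even power of $5$. On the other hand, Lemma~\ref{big1} applied to $V \unlhd G$ gives that $|w^V| = |[V,w]|$ divides $|w^G|_5 = 5$. The only possibility is $[V,w] = 1$, so $V \leq C_G(w)$.

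Now pick a Sylow $5$-subgroup $P$ containing $V$ and a nontrivial $y \in V \cap Z(P)$. Then $|y^G|_5 = 1$ and Lemma~\ref{N} gives $|y^G| \in \{72, 72^2\}$. Because $y$ and $w$ commute and have coprime orders, Lemma~\ref{big2} forces $|(wy)^G|$ to be a multiple of $\mathrm{lcm}(|w^G|,|y^G|)$; if $|y^G| = 72^2$ this multiple is $72^2 \cdot 5$, which is not in $N(A_6 \times A_6)$. Hence $|y^G| = 72$.

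The endgame is the hard part. I need to assemble commuting elements $x$ (a $3$-element), $y$ (the $5$-element just obtained), and $z$ (a $2$-element) whose pairwise class sizes are compatible with $N(G)$ but whose triple product has class size forced outside $N(G)$ by the lcm-divisibility of Lemma~\ref{big2}. I would build $z \in C_G(y)$ and $x \in C_G(z)$ by iterated use of Lemma~\ref{index}, arranging along the way that $x$ also centralizes $V$ (for example by choosing $x$ inside a Sylow $3$-subgroup of $C_G(V)$), so that $x$, $y$, $z$ pairwise commute. The main obstacle I foresee is the combinatorial bookkeeping: the $2$-, $3$- and $5$-parts of $|x^G|, |z^G|, |(xy)^G|, |(xz)^G|, |(yz)^G|$ are each confined by Lemma~\ref{N} to short lists, but the lcm-constraints from Lemma~\ref{big2} propagate in every direction, and at each branch a different value has to be ruled out. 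I expect the decisive move to be producing a second $5$-element $u \in V$ that is central in a Sylow $5$-subgroup chosen to contain a Sylow $5$-subgroup of $C_G(x)$; this should force the $5$-part of $|(xuz)^G|$ to reach $5^2$, which contradicts the fact that every element of $N(A_6 \times A_6)$ has $5$-part at most $5$.
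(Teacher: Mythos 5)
Your first half---the coprime-action argument giving $V \le C_G(w)$ and the extraction of $y \in V \cap Z(P)$ with $|y^G| = 72$---matches the paper and is correct. The endgame, which you yourself flag as the hard part, has two genuine gaps. First, your intended final contradiction is wrong as stated: $N(A_6\times A_6)$ does contain elements with $5$-part equal to $5^2$ (for instance $40\cdot40$, $40\cdot45$, $45\cdot45$), so merely forcing $|(xuz)^G|_5$ up to $5^2$ proves nothing. The contradiction actually available is that $|(xuz)^G|$ must be divisible by $2^6\cdot3^2\cdot5^2$ simultaneously, and no product $a\cdot b$ with $a,b\in N(A_6)$ has $5$-part $5^2$ together with $2$-part $2^6$ and $3$-part $3^2$: both factors would have to be divisible by $5$, and then the $2$-part reaches $2^6$ only when both factors equal $40$, which kills the $3$-part. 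Reaching this configuration requires exactly the bookkeeping you deferred: one must pin down $|(xy)^G| = |(yz)^G| = 40\cdot45$, $|(xz)^G| \in \{40\cdot45,\,40\cdot90\}$, and produce a $u$ with $|(xu)^G| = 40\cdot72$ so that the lcm accumulates all three large prime parts.

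Second, and more seriously, to apply Lemma~\ref{big2} to the product $xuz$ you need $u$ to centralize $z$, and nothing in your sketch delivers this. The coprime-action argument that gives $[V,w]=1$ for a $3$-element $w$ with $|w^G|=40$ does not transfer to the $2$-element $z$: there $|[V,z]|$ could be $5$, since the congruence $|[V,z]|\equiv 1 \pmod 2$ is automatic for a group of odd order and so gives no extra restriction. The paper's way around this is the subtlest step of the proof: having forced $|(xyz)^G|_5 = |(xy)^G|_5$, one deduces that a Sylow $5$-subgroup $Q$ of $C_G(xy)\cap C_G(z)$ is a Sylow $5$-subgroup of $C_G(xy)$, and since $V$ is a normal $5$-subgroup of $C_G(xy)$ it lies in $Q\le C_G(z)$. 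Without this step (or some substitute), $xuz$ is not a commuting product of elements of coprime orders, Lemma~\ref{big2} does not apply, and the argument does not close.
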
	

The proofs of the next two lemmas are very similar to that of Lemma~\ref{O5}. Nevertheless, we provide them.
	
	\begin{lemm}\label{O3}
	
	$O_3(G) = 1$.
	
	\begin{proof}
		
		Suppose, to the contrary, that $V$ is a nontrivial abelian normal $3$-subgroup of $G$.
		
		By the same arguments as in the proof of Lemma~\ref{O5}, an arbitrary $5$-element $w\in G$ with $|w^G|=72$ centralizes $V$ and there is an element $y\in V$ with $|y^G|=40$.
		
		In view of Lemma~\ref{index}, there is a $2$-element $z\in C_G(y)$ with $|z^G|_2\leq2^2$. Following exactly the proof of the previous lemma, we obtain that $|z^G|\in\{45,90\}$ and $|(yz)^G|\in\{40\cdot45,45\cdot72\}$. 
		
		Applying Lemma~\ref{index} once more, we obtain a $5$-element $x\in C_G(z)$ such that $|x^G|_5\le1$, whence $|x^G|\in\{72,72^2\}$. In fact $|x^G|=72$, otherwise Lemma~\ref{big2} implies that $72^2\cdot5$ divides $|(xz)^G|$, which is impossible. We have $|(xz)^G|_5=|z^G|_5=5$ by Lemma~\ref{index} and $|(xz)^G|$ is a multiple of lcm$(|x^G|,|z^G|)=360$. Therefore, $|(xz)^G|\in\{40\cdot72,45\cdot72,72\cdot 90\}$.
		
		By the second paragraph, $x\in C_G(y)$. It follows that $|(xy)^G|\le|x^G|\cdot|y^G|=40\cdot72$  and $|(xy)^G|$ is a multiple of lcm$(|x^G|,|y^G|)=360$ due to Lemma~\ref{big2}. Therefore, $|(xy)^G|\in\{40\cdot45,40\cdot72\}$.  
		
		The elements $x,y,z$ are of coprime orders and centralize each other. Hence, $|(xyz)^G|$ is a multiple of lcm$(|(xy)^G|,|(xz)^G|,|(yz)^G|)$, which is impossible due to Lemma~\ref{N}.		
	\end{proof}
\end{lemm}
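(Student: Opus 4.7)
The plan is to mirror the proof of Lemma~\ref{O5}, interchanging the roles of the primes $3$ and $5$. Suppose for contradiction that $G$ has a nontrivial normal abelian $3$-subgroup $V$. By Lemma~\ref{5e} I can choose a $5$-element $w$ with $|w^G|=72$; then Lemma~\ref{gor} gives $V = C_V(w) \times [V,w]$, and since $w$ acts fixed-point-freely on $[V,w] \setminus \{1\}$, the order $|[V,w]|$ is a power of $3$ satisfying $|[V,w]| \equiv 1 \pmod 5$. On the other hand $|w^V| = |[V,w]|$ divides $|w^G|_3 = 9$ by Lemma~\ref{big1}, so the only possibility is $|[V,w]| = 1$. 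Thus every such $w$ centralizes $V$.

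Next, pick $y \in V \cap Z(P) \setminus \{1\}$ for a Sylow $3$-subgroup $P$; then $|y^G|_3 = 1$, so by Lemma~\ref{N} one has $|y^G| \in \{40, 40^2\}$. The value $40^2$ is excluded by pairing with $w$: Lemma~\ref{big2} would force a multiple of $\mathrm{lcm}(72, 1600) = 14400$ to lie in $N(G)$, but $14400 > 8100 = \max N(A_6 \times A_6)$. Hence $|y^G| = 40$, and from this point the derivation of the $2$-element $z$ proceeds exactly as in Lemma~\ref{O5}: Lemma~\ref{index} yields $z \in C_G(y)$ with $|z^G|_2 \leq 4$, whence $|z^G| \in \{45, 90\}$ and $|(yz)^G| \in \{40 \cdot 45,\; 45 \cdot 72\}$.

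I would then apply Lemma~\ref{index} to $z$ with $p=5$ to obtain a $5$-element $x \in C_G(z)$ with $|x^G|_5 \leq 1$. By Lemma~\ref{N} this leaves $|x^G| \in \{72, 72^2\}$, and the larger value is eliminated because $\mathrm{lcm}(72^2, |z^G|)$ exceeds $8100$ for both $|z^G| \in \{45,90\}$. Thus $|x^G| = 72$, and the first step forces $x \in C_G(V) \subseteq C_G(y)$. Standard applications of Lemmas~\ref{big2} and~\ref{index} then narrow $|(xz)^G| \in \{40 \cdot 72,\; 45 \cdot 72,\; 72 \cdot 90\}$ and $|(xy)^G| \in \{40 \cdot 45,\; 40 \cdot 72\}$.

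Since $x,y,z$ pairwise commute and have pairwise coprime orders, $|(xyz)^G|$ is a common multiple of $|(xy)^G|$, $|(xz)^G|$, and $|(yz)^G|$. The final and only really arithmetic step---which I expect to be the main obstacle---is to verify that for every admissible triple the corresponding $\mathrm{lcm}$ strictly exceeds $8100$ and therefore cannot divide any element of $N(G)$, contradicting Lemma~\ref{N}. In each case this follows because the $2^3$ forced on $|(yz)^G|$ by Lemma~\ref{index}, combined with the $3$- and $5$-parts appearing in $|(xz)^G|$ and $|(xy)^G|$, drives the lcm above the maximum $90^2 = 8100$ in $N(A_6 \times A_6)$.
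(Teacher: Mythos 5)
Your proposal is correct and follows essentially the same route as the paper: the same choice of elements $w$, $y$, $z$, $x$ with the same class sizes, the same exclusions via Lemmas~\ref{big2} and~\ref{index}, and the same final contradiction from $\mathrm{lcm}(|(xy)^G|,|(xz)^G|,|(yz)^G|)$ exceeding $90^2=8100$ (which does check out in every admissible case, already on pairs). The only difference is that you spell out the adaptation of the first steps from Lemma~\ref{O5} (e.g.\ $|[V,w]|\equiv 1 \pmod 5$ forcing $|[V,w]|=1$), where the paper simply refers back to that proof.
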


\begin{lemm}\label{O2}
		
	$O_2(G) = 1$.
		
		\begin{proof}
			
			Suppose that $V$ is a nontrivial normal abelian $2$-subgroup of $G$.
						
			Applying the same arguments as in the proof of Lemma~\ref{O5}, we see that an arbitrary $5$-element $w\in G$ with $|w^G|=72$ centralizes $V$ and there is an element $y\in V$ with $|y^G|=45$.
			
			By Lemma~\ref{index}, there is a $3$-element $z\in C_G(y)$ with $|z^G|_3\leq3$, so $|z^G|\in\{40,40^2\}$. If $|z^G|=40^2$, then Lemma~\ref{big2} implies that $40^2\cdot3^2$ divides $|(yz)^G|$, a~contradiction. Therefore, $|z^G|=40$. 
			%Since $y$ is a $3'$-element, $|(yz)^G|_3=|y^G|_3=3^2$ due to Lemma~\ref{index}.
			It follows from Lemma~\ref{big2} that lcm$(|y^G|,|z^G|)=360$ divides $|(yz)^G|$ and $40\cdot 45\ge |(yz)^G|$. Thus, $|(yz)^G|=40\cdot45$.
						
			Applying Lemma~\ref{index} once more, we obtain a $5$-element $x\in C_G(z)$ with $|x^G|_5=1$, so $|x^G|\in\{72,72^2\}$. If $|x^G|=72^2$, then Lemma~\ref{big2} implies that $72^2\cdot5$ divides $|(xz)^G|$, a contradiction. Therefore, $|x^G|=72$.  Moreover, Lemma~\ref{index} yields $|(xz)^G|_5=|z^G|_5=5$. We have that lcm$(|x^G|,|z^G|)$ divides $|(xz)^G|$ and  $40\cdot 72\ge |(xz)^G|$ due to Lemma~\ref{big2}. Therefore, $|(xz)^G|=40\cdot72$.
			
			By the second paragraph, $x\in C_G(y)$. It follows that $x,y,z$ are of coprime orders and centralize each other. Hence, $|(xyz)^G|$ is a multiple of lcm$(|(xz)^G|,|(yz)^G|) = 40\cdot72\cdot5$, a~contradiction.
		\end{proof}
	\end{lemm}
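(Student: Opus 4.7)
The plan is to mirror the proofs of Lemmas~\ref{O5} and~\ref{O3} by assuming, for contradiction, that there is a nontrivial normal abelian $2$-subgroup $V$ of $G$ and then producing a triple of pairwise commuting $2$-, $3$-, and $5$-elements whose product has class size too large to lie in $N(G)$, whose maximum is $90\cdot 90 = 8100$.

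First I would show that any $5$-element $w\in G$ with $|w^G|=72$ (available by Lemma~\ref{5e}) centralizes $V$. Indeed, $w$ acts coprimely on $V$, so by Lemma~\ref{gor} we have $V=C_V(w)\times[V,w]$; the free action of $w$ on $[V,w]$ forces $|[V,w]|-1$ to be divisible by $5$, so $|[V,w]|=2^{4k}$. By Lemma~\ref{big1}, $|[V,w]|=|w^V|$ divides $|w^G|_2=2^3$, hence $[V,w]=1$.

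Next I would pick a Sylow $2$-subgroup $P$ of $G$ and a nontrivial $y\in V\cap Z(P)$, so $|y^G|_2=1$ and Lemma~\ref{N} leaves $|y^G|\in\{45,45^2\}$. Since $w$ centralizes $y$ with coprime orders, Lemma~\ref{big2} forces $\lcm(|y^G|,72)$ to divide some element of $N(G)$; but $\lcm(45^2,72)=16200$ already exceeds $8100$, so $|y^G|=45$. Two applications of Lemma~\ref{index} then produce a $3$-element $z\in C_G(y)$ with $|z^G|_3\le 3$ and a $5$-element $x\in C_G(z)$ with $|x^G|_5\le 1$. Each time the only admissible class sizes are $|z^G|=40$ and $|x^G|=72$, since the alternatives $40^2$ and $72^2$ would force respectively $40^2\cdot 3^2$ and $72^2\cdot 5$ to divide some element of $N(G)$, both impossible. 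Tracking $5$-parts via Lemma~\ref{index} and combining the lcm-divisibility and product upper bounds from Lemma~\ref{big2} pins down $|(yz)^G|=40\cdot 45$ and $|(xz)^G|=40\cdot 72$.

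Finally, by the first step $x$ centralizes $V$ and hence $y$, so $x,y,z$ pairwise commute with pairwise coprime orders. Lemma~\ref{big2} then forces $|(xyz)^G|$ to be a multiple of $\lcm(|(xz)^G|,|(yz)^G|)=\lcm(40\cdot 72,\,40\cdot 45)=14400$, which exceeds every element of $N(G)$. The main obstacle I expect is the bookkeeping in the previous paragraph: at each application of Lemma~\ref{index} one must simultaneously check that the larger candidate class size is forbidden and that the $p$-parts, together with the lcm and upper-bound constraints, isolate a unique admissible value so that the class sizes $|(yz)^G|$ and $|(xz)^G|$ combine into an lcm outside $N(G)$.
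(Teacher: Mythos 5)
Your proposal is correct and follows essentially the same route as the paper's proof: the same chain $y\in V$ with $|y^G|=45$, a $3$-element $z$ with $|z^G|=40$, a $5$-element $x$ with $|x^G|=72$ centralizing $V$, and the final contradiction via $\lcm(|(xz)^G|,|(yz)^G|)=14400\notin N(G)$. You additionally spell out the coprime-action argument (via Lemmas~\ref{gor} and~\ref{big1}) that the paper only cites from Lemma~\ref{O5}, and all the numerical checks are accurate.
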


%Then, $|y^G|=72$ and $y$ centralizes a Sylow $5$-subgroup of $C_G(x)$
	
	Thus, $M\leq G\leq\Aut(M)$, where $M=S_1\times\dots \times S_k$ is a direct product of finite nonabelian simple groups $S_1,\dots, S_k$. Since $\pi(S_i)\subseteq\pi(G)$, it follows that $S_i\in\{A_5,A_6,U_4(2)\}$. We will use information about this groups from~\cite{atlas}.
	
	\begin{lemm}\label{almost}
		
		 $M \simeq A_6\times A_6$.
		
		\begin{proof} 
			If $k = 1$, then $|G|_5>|\Aut(M)|_5$, a contradiction. If $k\geq 3$, then there is an element $x\in M$ with $|x^M|_5=5^3$. Lemma~\ref{big1} implies that $|x^M|$ divides $|x^G|$, a contradiction.
			
			Therefore, $k=2$. If one of the two direct factors is isomorphic to $U_4(2)$, then there is an element $x\in M$ with $|x^M|_2>2^6$, which is impossible.
			
			Thus, we have ${M\in\{A_5\times A_5, A_5\times A_6, A_6\times A_6\}}$. If ${M\in\{A_5\times A_5,  A_5\times A_6\}}$, then $|G|_3>|\Aut(M)|_3$, a contradiction. So $M \simeq A_6\times A_6$, as claimed.	
		\end{proof}
	\end{lemm}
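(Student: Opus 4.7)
The plan is to rule out each possibility for $M = S_1 \times \cdots \times S_k$ (with $S_i \in \{A_5, A_6, U_4(2)\}$) until only $A_6 \times A_6$ remains. I will repeatedly invoke two bounds. First, by Lemma~\ref{N} the largest $p$-parts appearing anywhere in $N(A_6 \times A_6) = N(G)$ are $2^6$, $3^4$, $5^2$ (realised, e.g., by $72 \cdot 72$ and $40 \cdot 40$), and since each class size divides $|G|$, these powers serve as lower bounds for $|G|_p$. Second, $|G|$ divides $|\Aut(M)|$, and for every $x \in M$ the class size $|x^M|$ divides $|x^G|$ by Lemma~\ref{big1}, so any large $p$-part of $|x^M|$ is forbidden as soon as it exceeds the corresponding ceiling.

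First I would dispose of $k = 1$: each of $\Aut(A_5), \Aut(A_6), \Aut(U_4(2))$ has $5$-part exactly $5$, contradicting $|G|_5 \geq 5^2$. Next I would rule out $k \geq 3$. Every candidate $S_i$ contains an element whose $S_i$-class size is divisible by $5$ — for instance, involutions in $A_5$ have class size $15$, $3$-elements in $A_6$ have class size $40$, and an analogous element exists in $U_4(2)$ via the ATLAS. Placing such elements in three distinct factors (and $1$ in the rest) produces $x \in M$ with $|x^M|_5 \geq 5^3$, hence $|x^G|_5 \geq 5^3$ by Lemma~\ref{big1}, exceeding the ceiling $5^2$.

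With $k = 2$ forced, I would exclude any $U_4(2)$ factor by a parallel $2$-part argument: $U_4(2)$ contains a $3$-element centralised only by its Sylow $3$-subgroup, with class size $|U_4(2)|/3^4 = 2^6 \cdot 5$. Combined with any element of the partner factor having even class size (each of $A_5, A_6, U_4(2)$ admits one), this produces an element of $M$ whose $2$-part exceeds $2^6$, a contradiction. Finally, for $M \in \{A_5 \times A_5, A_5 \times A_6, A_6 \times A_6\}$ I would compute the $3$-parts $|\Aut(M)|_3 = 3^2, 3^3, 3^4$ respectively, and only the last is compatible with $|G|_3 \geq 3^4$. I expect the main obstacle to be confirming the precise class sizes I need in $U_4(2)$, which I would simply read off from \cite{atlas}; the remainder is arithmetic with $|\Aut(\cdot)|_p$ combined with Lemma~\ref{big1}.
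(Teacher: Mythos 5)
Your proposal is correct and follows essentially the same route as the paper: rule out $k=1$ and the all-$A_5$ cases by comparing $|\Aut(M)|_p$ with the lower bounds $|G|_5\ge 5^2$ and $|G|_3\ge 3^4$ forced by $N(G)$, and rule out $k\ge 3$ and a $U_4(2)$ factor by exhibiting elements of $M$ whose $M$-class sizes have too large a $5$- or $2$-part, via Lemma~\ref{big1}. One small correction to your $U_4(2)$ step: there is no $3$-element of $U_4(2)$ centralized exactly by a Sylow $3$-subgroup (the $3$-central classes have centralizer order $648$), so use instead an element of order $9$, whose centralizer has order $9$ and whose class size $2880$ has $2$-part exactly $2^6$; paired with any element of even class size in the other factor this gives the required contradiction.
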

	
	\begin{lemm}
		
		$G\simeq A_6\times A_6$.
		
		\begin{proof}
			By Lemma~\ref{almost}, we have $S_1\times S_2\leq G\leq\Aut(S_1\times S_2) = \Aut(S_1)\wr \langle a\rangle$, where $a$ is an involution and $S_1\simeq S_2\simeq A_6$.
			
			If $a\in G$, then $G\simeq H\wr \langle a\rangle$, where $A_6\leq H\leq\Aut(A_6)$. Lemma~\ref{wreath} states that $|a^G|=|H|$. This is a contradiction since $|H|\not\in N(G)$.
			
			Suppose that $a\not\in G$, so $S_1\times S_2\leq G\leq\Aut(S_1)\times\Aut(S_2)$. If $S_1\times S_2< G$, then $G$ includes a subgroup $G_1$ of the form $(S_1\times S_2).2=\langle S_1\times S_2, (\varphi_1,\varphi_2)\rangle$, where $\varphi_i\in\Aut(S_i)$. We may assume that $\varphi_1\not\in S_1$. Also observe that $G_1$ is normal in $G$.
			
			There are three isomorphism types of $A_6.2$. For every possible isomorphism type of $A_6.2$ there is an element $x\in A_6$ with $|x^{A_6.2}|_2>2^3$, so $C_{A_6.2}(x)=C_{A_6}(x)$.

			Take an element $x=(x_1,x_2)\in G_1$ such that $|{x_1}^{\langle S_1.\varphi_1\rangle}|>2^3$ and $|{x_2}^{\langle S_2.\varphi_2\rangle}|\ge2^3$. Since $C_{\langle S_i,\varphi_i\rangle}(x_i)=S_i$ regardless of whether $\varphi_i$ is outer or not, we have $C_{G_1}(x) = C_{S_1}(x_1)\times C_{S_2}(x_2)$. It follows that $|x^G_1|_2>2^6$. This contradiction completes the proof of the lemma and the~theorem.
		\end{proof}
	\end{lemm}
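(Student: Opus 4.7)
The plan is to exploit the structure $\mathrm{Aut}(A_6 \times A_6) = \mathrm{Aut}(A_6) \wr \langle a\rangle$, where $a$ is the involution interchanging the two factors, and to split into the cases $a \in G$ and $a \notin G$. If $a \in G$, then $G$ decomposes as a wreath product $H \wr \langle a\rangle$ for some $H$ with $A_6 \le H \le \mathrm{Aut}(A_6)$, so Lemma~\ref{wreath} yields $|a^G| = |H|$, a multiple of $360$ dividing $1440$. A direct check against the multiplicative description in Lemma~\ref{N} shows that none of $360$, $720$, $1440$ lies in $N(A_6 \times A_6)$, so this case is immediately ruled out.

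Suppose now $a \notin G$, so $G \le \mathrm{Aut}(S_1) \times \mathrm{Aut}(S_2)$. If $G = S_1 \times S_2$ we are done; otherwise $G$ contains a normal subgroup $G_1$ of the form $(S_1 \times S_2).2 = \langle S_1 \times S_2, (\varphi_1, \varphi_2)\rangle$, where, after relabelling, $\varphi_1 \notin \mathrm{Inn}(S_1)$. Since $G_1 \trianglelefteq G$, Lemma~\ref{big1} reduces the task to exhibiting an element $x \in G_1$ whose $G_1$-conjugacy class has $2$-part exceeding $2^6$: no such value can appear in $N(A_6 \times A_6)$, whose maximum $2$-part is $(|A_6|_2)^2 = 2^6$.

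The construction of $x$ rests on the following extraction from the ATLAS data for the three isomorphism types of $A_6.2$ (namely $S_6$, $PGL_2(9)$, and $M_{10}$): in each such group there is an element $x_i \in A_6$ whose conjugacy class in $A_6.2$ has $2$-part strictly greater than $2^3$, which forces $C_{A_6.2}(x_i) = C_{A_6}(x_i)$ regardless of whether $\varphi_i$ is inner or outer. Taking $x = (x_1, x_2) \in G_1$ with $x_1, x_2$ chosen by this criterion in the two factors, one gets $C_{G_1}(x) = C_{S_1}(x_1) \times C_{S_2}(x_2)$, whence a direct computation of $|x^{G_1}|_2$ gives the needed contradiction. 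The main bookkeeping obstacle is the case analysis required to pick suitable $x_1, x_2$ for every combination of the three possible isomorphism types of $\langle S_i, \varphi_i\rangle$ and to verify the centralizer equality in each case; once these are secured, the contradiction assembles itself.
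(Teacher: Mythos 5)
Your proposal is correct and follows essentially the same route as the paper's own proof: the same case split on whether the factor-swapping involution $a$ lies in $G$, the same appeal to Lemma~\ref{wreath} to rule out the first case, and in the second case the same construction of a normal subgroup $G_1=(S_1\times S_2).2$ together with an element $x=(x_1,x_2)$ chosen from ATLAS data so that $|x^{G_1}|_2>2^6$, contradicting Lemma~\ref{big1} and Lemma~\ref{N}.
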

	
	The author is grateful to I.B.~Gorshkov, M.A. Grechkoseeva and A.V.~Vasil'ev for helpful comments and suggestions.

\end{document}